\DeclareMathOperator{\cone}{cone}
\begin{document}
\title{Isotonicity of the projection onto the monotone cone
\thanks{{\it 1991 A M S Subject Classification.} Primary 90C33;
Secondary 15A48, {\it Key words and phrases.} Metric projection onto convex cones}}
\author{A. B. N\'emeth\\Faculty of Mathematics and Computer Science\\Babe\c s Bolyai University, Str. Kog\u alniceanu nr. 1-3\\RO-400084 Cluj-Napoca, Romania\\email: nemab@math.ubbcluj.ro \and S. Z. N\'emeth\\School of Mathematics, The University of Birmingham\\The Watson Building, Edgbaston\\Birmingham B15 2TT, United Kingdom\\email: nemeths@for.mat.bham.ac.uk}
%\author{\large A. B. N\'emeth, S. Z. N\'emeth}
\date{}
\maketitle

\begin{abstract}
A wedge (i.e., a closed nonempty set in the Euclidean space stable under addition
and multiplication with non-negative scalars) induces by a standard way a semi-order 
(a reflexive and transitive binary relation) in the space. 
The wedges admitting isotone metric projection with respect to the semi-order
induced by them are characterized. The obtained result is used to show that the monotone
wedge (called monotone cone in regression theory) admits isotone projection.
\end{abstract}

\newenvironment{proof}{{\bf Proof.}}{\hfill$\Box$\\}
\newtheorem{theorem}{Theorem}
\newtheorem{definition}{Definition}
\newtheorem{lemma}{Lemma}
\newtheorem{proposition}{Proposition}
\newtheorem{remark}{Remark}
\newtheorem{example}{Example}
\newtheorem{corollary}{Corollary}

\newcommand{\rem}{{\bf  \hspace*{2 mm} Remark\ }}
\newcommand{\defi}{{\bf  \hspace*{2 mm} Definition\ }}
\newcommand{\arr}{\longrightarrow}
\newcommand{\Arr}{\Rightarrow}
\newcommand{\R}{\mathbb R}
\newcommand{\N}{\mathbb N}
\newcommand{\al}{\alpha}
\newcommand{\la}{\lambda}
\newcommand{\lang}{\langle}
\newcommand{\rang}{\rangle}

\newcommand{\conv}{\textrm{co}}

\newcommand{\stm}{\setminus}
\newcommand{\sbs}{\subset}
\newcommand{\mc}{\mathcal}
\newcommand{\ri}{\textrm{ri}}
\newtheorem{theo}{Theorem}
\newtheorem{lem}{Lemma}
\newtheorem{cor}{Corollary}
\newtheorem{prp}{Proposition}
\newcommand{\edem}{$\hspace{\stretch{22}}\Box$\vskip3mm}

\section{Introduction}
The metric projection onto convex cones is an important tool in solving problems in
metric geometry, statistics, image reconstruction etc.
The idea to relate the ordering induced by the convex cone and the metric projection
onto the convex cone goes back to the paper \cite{IsacNemeth1986} of G. Isac and A. B. N\'emeth, 
where a convex cone in the Euclidean space which admits an isotone projection 
onto it (called by the authors \emph{isotone projection cone}) was characterized. The isotonicity is 
considered with respect to the order induced by the convex cone.
This notion was considered in the context of the complementarity theory where
the isotonicity of the projection provides new existence results and iterative methods
\cite{IsacNemeth1990b,IsacNemeth1990c,Nemeth2009a}.

It turns out that the isotonicity
of the projection is a very strong requirement which implies the latticiality of the order induced by the convex cone.
Thus, the investigation of the isotone projection cones becomes part of the theory of
latticially ordered Euclidean and Hilbert spaces.

 A
simple finite method of projection onto isotone projection cones proposed by us (see
\cite{NemethNemeth2009}) has become important in the effective handling of
all the problems involving projection onto these cones. Besides nonlinear
complementarity, isotone projection cones have applications in
other domains of optimization theory. The positive monotone convex cone used in the Euclidean
distance geometry (see \cite{Dattorro2005}) is an isotone projection one. 
Our method has become important in the effective handling of 
the problem of map-making from relative distance information e.g., stellar cartography 
(see 
{\small
\begin{verbatim}
www.convexoptimization.com/wikimization/index.php/Projection_on_Polyhedral_Cone 
\end{verbatim}}

\noindent and Section 5.13.2.4 in \cite{Dattorro2005}).

Although we shall not consider projection methods in this note,
some of the results developed in \cite{NemethNemeth2009}
will be useful in our proofs. The notion of the cone in the
above cited papers is used in the sense of ``closed convex pointed cone''.
Confronted with the question if the so called monotone cone (which is in fact a wedge in our 
terminology) used in regression theory admits or not an isotonic metric
projection onto it (where isotonicity is considered with respect to the semi-order the
monotone cone introduces), we shall develop a general theory in order to apply it to this 
special case. This seems to be the simplest way to tackle this problem. By using this 
approach, it turns out that the monotone cone indeed admits an isotone metric projection onto 
it.

\section{Projecting onto closed wedges in $\R^m$}

If $C$ is a non-empty, closed convex set in $\R^m$, then for
each $x\in \R^m$ there exists a unique nearest point $P_Cx\in C$,
that is, a point with the property that
$$\|x-P_Cx\|=\inf \{\|x-c\|:\;c\in C\},$$
where $\|.\|$ stands for the Euclidean norm in $\R^m$ (\cite{Zarantonello1971}).

The mapping $P_C:\R^m\to C$ is called \emph{the nearest point mapping of $\R^m$
onto $C$} or simply the \emph{the (metric) projection onto $C$}.

Let $W$ be a \emph{wedge} in $\R^m$, i. e., a closed nonempty set with
(i) $W+W\subset W$ and (ii) $tW\subset W,\;\forall \;t\in \R_+ =[0,+\infty)$. If 
$W\cap (-W)=\{0\}$, then $W$ is called a \emph{cone}.

\begin{lemma}\label{ossz}
Suppose $L=W\cap (-W)$ (the maximal subspace contained in $W$) and let $L^\perp$ be
its orthogonal complement. Denote $K=L^\perp \cap W$. Then $K$ is a cone in $L^\perp$,
\begin{equation}\label{ortossz}
W=K\oplus L
\end{equation}
where $\oplus$ stands for the orthogonal sum, and
\begin{equation}\label{projek}
P_Wx=P_Kx_k+x_l
\end{equation}
where $x=x_k+x_l$ with $x_l\in L$ and $x_k\in L^\perp.$
\end{lemma}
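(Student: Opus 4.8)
The plan is to handle the statement in the natural order in which its pieces depend on one another: first the linear-algebraic facts about $L$ and $K$, then the orthogonal splitting (\ref{ortossz}), and only last the projection formula (\ref{projek}), since the latter rests entirely on the former. I would begin by checking that $L = W \cap (-W)$ really is a linear subspace. As an intersection of the two wedges $W$ and $-W$ it is closed and stable under addition and nonnegative scaling; it is also stable under negation, because $x \in L$ forces both $x \in W$ and $-x \in W$. These together make $L$ a subspace, and it is the largest one inside $W$ since any subspace $S \subset W$ satisfies $S = -S \subset W \cap (-W) = L$.

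Next I would dispose of the cone property and the decomposition. The set $K = L^\perp \cap W$ is a wedge, being the intersection of the subspace $L^\perp$ with the wedge $W$; pointedness follows from the computation $K \cap (-K) = (L^\perp \cap W) \cap (L^\perp \cap (-W)) = L^\perp \cap (W \cap (-W)) = L^\perp \cap L = \{0\}$, where I use $-L^\perp = L^\perp$. For (\ref{ortossz}) I would take $w \in W$, split it orthogonally as $w = w_k + w_l$ with $w_k \in L^\perp$ and $w_l \in L$, and check the only nontrivial point, namely $w_k \in W$: since $w_l \in L \subset -W$ we have $-w_l \in W$, so $w_k = w + (-w_l) \in W + W \subset W$, giving $w_k \in K$. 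The reverse inclusion $K + L \subset W$ is immediate, and the sum is orthogonal because $K \subset L^\perp$.

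With (\ref{ortossz}) in hand, the projection formula comes from Pythagoras. Writing $x = x_k + x_l$ and an arbitrary $w \in W$ as $w = w_k + w_l$ with $w_k \in K \subset L^\perp$ and $w_l \in L$, orthogonality of $L^\perp$ and $L$ yields
$$\|x - w\|^2 = \|x_k - w_k\|^2 + \|x_l - w_l\|^2.$$
Because (\ref{ortossz}) lets $w_k$ and $w_l$ vary independently over $K$ and $L$, the two terms minimize separately: the second vanishes at $w_l = x_l \in L$, and the first is minimized at $w_k = P_K x_k$. Hence the nearest point of $W$ to $x$ is $P_K x_k + x_l$, which indeed lies in $W$, and by uniqueness of the metric projection this is $P_W x$.

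The one genuinely load-bearing step is the decoupling of the minimization in the last display. It is not enough that the squared distance splits into two nonnegative summands; one must know that minimizing over $w \in W$ coincides with minimizing over the pair $(w_k, w_l) \in K \times L$ with the two coordinates free of each other, and this is exactly what the orthogonal direct sum $W = K \oplus L$ furnishes. That is why I would insist on proving (\ref{ortossz}) before touching the projection; the remaining subspace and cone verifications are routine checks of the closure conditions defining a wedge.
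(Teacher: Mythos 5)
Your proposal is correct, and for the projection formula it takes a genuinely different route from the paper. The paper invokes Zarantonello's variational characterization of the nearest point in a closed convex set --- $\lang x-P_Wx,\,y\rang \leq 0$ for all $y\in W$ together with $\lang x-P_Wx,\,P_Wx\rang =0$ --- and verifies both relations for the candidate $P_Kx_k+x_l$ by splitting the test vector $y$ along $K\oplus L$ and using orthogonality of $L$ and $L^\perp$. You instead argue by direct minimization: Pythagoras gives $\|x-w\|^2=\|x_k-w_k\|^2+\|x_l-w_l\|^2$, and the decomposition $W=K\oplus L$ puts $W$ in bijection with $K\times L$, so the two terms minimize independently at $w_k=P_Kx_k$ and $w_l=x_l$. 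Both arguments are sound and of comparable length; the paper's approach has the advantage that the same pair of inequalities is the tool reused in the proof of Theorem \ref{fo}, while yours is more elementary, requiring only existence and uniqueness of nearest points rather than their variational characterization. You also supply details the paper treats as immediate: that $L$ is a subspace and is maximal, that $K$ is pointed, and --- most substantively --- the inclusion $W\subset K\oplus L$ (namely that the $L^\perp$-component $w_k=w+(-w_l)$ of $w\in W$ lies in $W+W\subset W$ because $w_l\in L=W\cap(-W)$ gives $-w_l\in W$), which the paper dispatches with the single line $W=W\cap(L^\perp\oplus L)$ even though this is precisely where the wedge axioms do their work. Your identification of the decoupled minimization as the load-bearing step is accurate.
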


\begin{proof}
The relation (\ref{ortossz}) follows directly  from
$$W=W\cap (L^\perp \oplus L).$$
It is known (\cite{Zarantonello1971}) that the projection $P_Wx$ of $x$ onto 
the wedge $W$ is characterized
by the couple of relations:
\begin{equation}\label{elso}
\lang x-P_Wx,y\rang \leq 0,\;\forall \; y \in W,
\end{equation}
and
\begin{equation}\label{masod}
\lang x-P_Wx,P_Wx\rang = 0.
\end{equation}
Hence, we have to verify the above relations for $P_Kx_k+x_l$ instead of $P_Wx$.

By the relation (\ref{ortossz}) $P_Kx_k+x_l \in W.$

Take an arbitrary $y\in W$ represented by (\ref{ortossz}) in the form
$$y=y_k+y_l$$
with $y_k\in K$ and $y_l\in L$.
Then we have
$$ \lang x_k+x_l-(P_Kx_k+x_l),y_k+y_l\rang = \lang x_k-P_Kx_k,y_k\rang \leq 0,\;\;\forall \;y=y_k+y_l\in W,$$
because $y_l$ is perpendicular to $x_k-P_Kx_k\in L^\perp,$
and because of the relation similar to (\ref{elso}) characterizing the projection of $x_k$
onto the cone $K$ in $L^\perp.$
Thus, relation (\ref{elso}) holds for $P_Kx_k+x_l$ in place of $P_Wx.$

We further have 
$$\lang x_k+x_l-(P_Kx_k+x_l),P_Kx_k+x_l\rang =\lang x_k-P_Kx_k,P_Kx_k\rang =0$$
because $x_l$ is perpendicular to $x_k-P_Kx_k$ and because of the relation similar to
(\ref{masod}) applied to $x_k\in L^\perp$ and its projection onto $K$.

The obtained relation is exactly (\ref{masod}) for $P_Kx_k+x_l$ instead of $P_Wx.$
\end{proof}

\section{The isotonicity of the projection onto a closed wedge in $\R^m$}

By putting $u\leq_W v$ whenever $u,\;v\in \R^m$ and $v-u\in W,$
the wedge $W\subset \R^m$ induces a \emph{semiorder} $\leq_W$ in $\R^m$
which is translation invariant (i. e. $u\leq_W v$ implies $u+z\leq_W v+z$ for any $z\in \R^m$)
and scale invariant (i.e. $u\leq_W v$ implies $tu\leq_W tv$ for any $t\in \R_+$).

The projection $P_W$ is said \emph{$W$-isotone} if $u,\;v\in \R^m,\;u\leq_Wv$ implies $P_Wu\leq_W P_Wv.$
If $P_W$ is $W$-isotone, then $W$ is called an \emph{isotone projection wedge}. A cone $K$ is 
called an \emph{isotone projection cone} if it is an isotone projection wedge.

\begin{theorem}\label{fo}
Let $W\subset \R^m$ be a wedge,
$$W=K\oplus L$$
with $L=W\cap (-W)$ and $K=W\cap L^\perp$.
Then $W$ is an isotone projection wedge if and only if
$K\subset L^\perp$ is an isotone projection cone in $L^\perp$.
\end{theorem}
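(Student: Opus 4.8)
The plan is to exploit the decomposition already established in Lemma \ref{ossz}, which gives the clean formula $P_Wx=P_Kx_k+x_l$. The key observation is that the $L$-component of any point passes through the projection completely unaltered: the projection acts nontrivially only on the $L^\perp$-component, where it coincides with $P_K$. So $W$-isotonicity of $P_W$ should be reducible to $K$-isotonicity of $P_K$ by restricting attention to the subspace $L^\perp$, once we understand how the semiorder $\le_W$ on $\R^m$ relates to the semiorder $\le_K$ on $L^\perp$.

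First I would make precise the relationship between the two orders. Writing $u=u_k+u_l$ and $v=v_k+v_l$ with the $k$-parts in $L^\perp$ and $l$-parts in $L$, I claim that $u\le_W v$ holds if and only if $u_k\le_K v_k$ in $L^\perp$. Indeed, $v-u=(v_k-u_k)+(v_l-u_l)\in W=K\oplus L$ forces the $L^\perp$-component $v_k-u_k$ to lie in $K$ (since the decomposition is orthogonal and unique), while the $L$-component $v_l-u_l$ is automatically in $L$ and places no constraint. This is the crucial structural fact: the semiorder $\le_W$ ignores the $L$-coordinates entirely and reduces to $\le_K$ on the $L^\perp$-coordinates.

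With this in hand, both directions follow by direct computation. For the \emph{if} direction, assume $K$ is an isotone projection cone in $L^\perp$ and take $u\le_W v$. By the reduction, $u_k\le_K v_k$, so isotonicity of $P_K$ gives $P_Ku_k\le_K P_Kv_k$. Using (\ref{projek}), the difference of the two projections is
\[
P_Wv-P_Wu=(P_Kv_k+v_l)-(P_Ku_k+u_l)=(P_Kv_k-P_Ku_k)+(v_l-u_l),
\]
whose $L^\perp$-part $P_Kv_k-P_Ku_k$ lies in $K$ and whose $L$-part lies in $L$; hence the whole difference lies in $K\oplus L=W$, i.e. $P_Wu\le_W P_Wv$. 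For the \emph{only if} direction, suppose $P_W$ is $W$-isotone and take $u_k\le_K v_k$ in $L^\perp$; viewing these as elements of $\R^m$ (with zero $L$-part) gives $u_k\le_W v_k$, so $P_Wu_k\le_W P_Wv_k$, and since $u_k,v_k\in L^\perp$ their $l$-parts vanish and $P_Wu_k=P_Ku_k$, $P_Wv_k=P_Kv_k$; the reduction then yields $P_Ku_k\le_K P_Kv_k$, establishing isotonicity of $P_K$.

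I do not expect a serious obstacle here, since Lemma \ref{ossz} does the heavy lifting. The one point requiring genuine care is the order-reduction claim in the first step: one must verify that the orthogonality of the sum $K\oplus L$ guarantees uniqueness of the decomposition of $v-u$, so that membership in $W$ is equivalent to membership of the $L^\perp$-part in $K$. This uses that $K\subset L^\perp$ and that $L^\perp$ and $L$ intersect only in $0$, which is immediate. Everything else is bookkeeping with the orthogonal decomposition.
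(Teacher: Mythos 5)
Your proposal is correct and follows essentially the same route as the paper: both directions rest on Lemma \ref{ossz} and on the observation that $u\le_W v$ is equivalent to $v_k-u_k\in K$, with the necessity part obtained by restricting to $u,v\in L^\perp$ so that $P_W$ and $P_K$ coincide. The only cosmetic difference is that you isolate the order-reduction claim as an explicit preliminary step, whereas the paper verifies it inline; the substance is identical.
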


\begin{proof}
Take $u,\;v\in L^\perp$. Then, $u\leq_K v$ is equivalent to $u\leq_W v$.
If $P_W$ is $W$-isotone, then $u\leq_W v$ implies by Lemma \ref{ossz}
\begin{equation*}
P_Kv-P_Ku=P_Wv-P_Wu\in W.
\end{equation*}
Since $P_Ku,\:P_Kv\in L^\perp$, it follows that
$$P_Kv-P_Ku\in L^\perp \cap W=K.$$
The obtained relation shows that $P_K$ is $K$-isotone,
concluding the proof of the necessity of the theorem.

Suppose now that $P_K$ is $K$-isotone and take $u,\;v\in \R^m$
with $u\leq_W v$. If $u=u_k+u_l$ and $v=v_k+v_l$ with
$u_k,\;v_k\in L^\perp,$ and $u_l,\;v_l\in L,$
then using formula (\ref{ortossz})
$$v-u=v_k-u_k+v_l-u_l\in K\oplus L$$
and hence $v_k-u_k\in K$, that is $u_k\leq_K v_k$
and by the $K$-isotonicity of $P_K$ it follows that
$$P_Kv_k-P_Ku_k\in K.$$
Hence, using formula (\ref{projek}) we have
$$P_Wv-P_Wu=P_Kv_k+v_l-P_Ku_k-u_l= P_Kv_k-P_Ku_k +v_l-u_l\in K\oplus L=W.$$
That is $P_Wu\leq_W P_Wv$, which concludes the isotonicity of $P_W$.
\end{proof}

A simple geometric characterization of the isotone projection
cones was given in \cite{IsacNemeth1986}. It uses the notion of
the polar of a wedge.

If $W\subset \R^m$ is a wedge, then the set
$$W^{\perp}=\{y\in \R^m:\;\langle x,y\rangle \leq 0, \;\forall \;x\in W\},$$
is called the \emph{polar} of the wedge  $W$. The set
$W^{\perp}$ is obviously a wedge. If the wedge $W$ is
\emph{generating} in the sense that $W-W=\R^m$, then
the polar $W^{\perp}$ is a cone. 

We have the following
easily verifiable result:
 \begin{lemma}\label{mineknev}
Suppose that $W$ is a generating wedge. Using the notations
introduced in the Theorem \ref{fo}, and denoting the polar of the cone $K$
in the subspace $L^{\perp}$ by $K^{\perp}$, we have the relation
$$W^\perp=iK^{\perp},$$
where $i$ is the inclusion mapping of $L^{\perp}$ into $\R^m.$
\end{lemma}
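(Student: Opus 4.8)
The plan is to show directly that the two sets coincide, treating $iK^{\perp}$ simply as $K^{\perp}$ regarded as a subset of $\R^m$ (it already sits inside $L^{\perp}$). The first and key step is to verify that every element of $W^{\perp}$ automatically lies in $L^{\perp}$. Indeed, since $L=W\cap(-W)$ is a linear subspace contained in $W$, for any $y\in W^{\perp}$ and any $\ell\in L$ both $\ell$ and $-\ell$ belong to $W$, so $\langle \ell,y\rangle\leq 0$ and $\langle -\ell,y\rangle\leq 0$ force $\langle \ell,y\rangle=0$; hence $y$ is orthogonal to $L$ and $W^{\perp}\subset L^{\perp}$.

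Once this inclusion is in hand, I would restrict attention to vectors $y\in L^{\perp}$ and compare the two polarity conditions. Using the orthogonal decomposition $W=K\oplus L$ from (\ref{ortossz}), write an arbitrary $x\in W$ as $x=x_k+x_l$ with $x_k\in K$ and $x_l\in L$. Since $y\in L^{\perp}$ annihilates the $L$-component, $\langle x,y\rangle=\langle x_k,y\rangle$, so the requirement $\langle x,y\rangle\leq 0$ for all $x\in W$ is equivalent to $\langle x_k,y\rangle\leq 0$ for all $x_k\in K$. This last condition, together with $y\in L^{\perp}$, is exactly the statement that $y\in K^{\perp}$. Thus $W^{\perp}\cap L^{\perp}=K^{\perp}$, and combined with the inclusion $W^{\perp}\subset L^{\perp}$ already established, this yields $W^{\perp}=K^{\perp}=iK^{\perp}$.

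There is essentially no hard analytic obstacle here; the content is the bookkeeping of the orthogonal splitting. The one step deserving care is the first one, establishing $W^{\perp}\subset L^{\perp}$, since it is precisely what guarantees that polarity against $W$ never detects anything outside $L^{\perp}$ and therefore collapses to polarity against $K$ inside $L^{\perp}$. The generating hypothesis on $W$ is not actually needed for the set equality itself; its role, namely ensuring that $K$ generates $L^{\perp}$ and hence that $W^{\perp}$ and $K^{\perp}$ are genuine (pointed) cones, is merely to place the statement in the cone setting used in the sequel.
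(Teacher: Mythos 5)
Your argument is correct: the key observation that $W^{\perp}\subset L^{\perp}$ (because $L$ is a subspace contained in $W$, so polarity against $\pm\ell$ forces orthogonality to $L$) together with the splitting $W=K\oplus L$ from (\ref{ortossz}) gives exactly the claimed identity, and this is precisely the ``easy verification'' the paper alludes to but does not write out. Your side remark is also accurate: the generating hypothesis is not needed for the set equality itself, only to guarantee that $K-K=L^{\perp}$ and hence that $K^{\perp}$ (equivalently $W^{\perp}$) is a pointed cone, which is how the lemma is used in the corollary that follows.
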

Putting together the main result in \cite{IsacNemeth1986}, Theorem
\ref{fo} and Lemma \ref{mineknev}, we have the following conclusion:
\begin{corollary}
The generating wedge $W$ is an isotone projection wedge if and only if
its polar $W^{\perp}$ is a cone generated by linearly independent
vectors forming mutually non-acute angles. 
\end{corollary}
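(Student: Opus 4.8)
The plan is to obtain the corollary by concatenating the three biconditional results the text has just lined up: Theorem~\ref{fo}, the geometric characterization of isotone projection cones from \cite{IsacNemeth1986}, and Lemma~\ref{mineknev}. Since each of these is an equivalence, no additional argument is needed beyond verifying that the hypotheses of each can be supplied and that the geometric data match up across them.

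First I would reduce the wedge to a cone. By Theorem~\ref{fo}, the generating wedge $W$ is an isotone projection wedge if and only if $K=W\cap L^\perp$ is an isotone projection cone in the Euclidean space $L^\perp$. Before invoking the cone characterization I would check that $K$ inherits the generating property: from $W=K\oplus L$ and the fact that $L$ is a subspace one gets $W-W=(K-K)\oplus L$, so $W-W=\R^m$ forces $K-K=L^\perp$, i.e.\ $K$ is generating in $L^\perp$. This is exactly what is needed both for the full-dimensional characterization of \cite{IsacNemeth1986} to apply and for $K^\perp$ to be a pointed cone.

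Next I would apply the main result of \cite{IsacNemeth1986}: $K$ is an isotone projection cone in $L^\perp$ if and only if its polar $K^\perp$ (taken inside $L^\perp$) is generated by linearly independent vectors forming pairwise non-acute angles. Finally, Lemma~\ref{mineknev} identifies $W^\perp$ with $iK^\perp$, where $i$ is the inclusion of $L^\perp$ into $\R^m$. Because $i$ is a linear, inner-product-preserving injection, it carries a system of linearly independent, mutually non-acute generators of $K^\perp$ to a system of generators of $W^\perp$ with the same two properties, and conversely. Chaining the three equivalences then yields the stated characterization.

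The step requiring the most care, and the only one beyond bookkeeping, is this transport along $i$: I would verify explicitly that injectivity of $i$ preserves linear independence of the generating vectors and that $i$ being an isometric embedding of $L^\perp$ preserves all pairwise inner products, so that ``linearly independent'' and ``mutually non-acute'' are genuinely invariant under the passage from $K^\perp$ to $W^\perp$. Once these invariances are confirmed the corollary follows immediately.
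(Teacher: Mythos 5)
Your proposal is correct and follows exactly the paper's own route: the paper proves this corollary simply by ``putting together'' Theorem~\ref{fo}, the main result of \cite{IsacNemeth1986}, and Lemma~\ref{mineknev}. Your additional checks (that $K$ is generating in $L^\perp$ and that the inclusion $i$ preserves linear independence and pairwise inner products) are sound and merely make explicit what the paper leaves implicit.
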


\section{Application: The isotonicity of the monotone wedge}

Suppose that $\R^m$ is endowed with a Cartesian coordinate system,
and $x\in \R^m$, $x=(x^1,...,x^m)$ where $x^i$ are the coordinates of
$x$ with respect to this reference system. The set
\begin{equation}\label{mon}
W=\{x\in \R^m:\;x^1\geq x^2\geq ...\geq x^m\}
\end{equation}
is called the \emph{monotone cone} (see e.g.\cite{BestChakravarti1990}).
To be in accordance with our earlier terminology, we shall use for $W$
instead the term \emph{monotone wedge}.

 Let 
\begin{equation*}
L=W\cap(-W)=\{x\in \R^m:\;x^1=x^2=...=x^m\}.
\end{equation*}
Then $L\subset W$, the maximal subspace contained in $W$, is of dimension one.
We have also that
\begin{equation}\label{subcone}
K=L^\perp \cap W
\end{equation}
is an $m-1$-dimensional cone in the hyperplane $L^\perp$ and
\begin{equation*}%\label{direct_sum}
W=W\cap (L^\perp \oplus L)=K\oplus L.
\end{equation*}
We will show that the cone $K$ given by (\ref{subcone}) is
an isotone projection cone in $L^\perp$. To do this,
we have to introduce some notations.

Let us take the following base in $\R^m$: 
$$e_1=(1,0,...,0)$$
$$e_2=(1,1,0,...,0),$$
$$\dots$$
$$e_{m-1}=(1,...,1,0),$$
$$e_m=(1,1,...,1).$$
An arbitrary element $x=(x^1,...,x^m)\in \R^m$ can be represented in the form
\begin{equation}\label{kifejez}
x=(x^1-x^2)e_1+(x^2-x^3)e_2+...+(x^{m-1}-x^m)e_{m-1}+x^me_m,
\end{equation}
the relation $x\in W$ being equivalent with 
\begin{equation}\label{ekelem}
x^{j-1}-x^j\geq 0,\;\;j=2,...,m.
\end{equation}

Let us consider further the following base in $L^\perp$:

$$e'_1=(m-1,-1,-1,...,-1).$$
$$e'_2=(m-2,m-2,-2,...,-2),$$

$$\dots$$
$$e'_{m-1}=(1,...,1,-(m-1)).$$

The following notation is standard in the convex geometry
and ordered vector space theory: If $M\subset \R^m$ is a non-empty set,
then let
$$\cone M=\{t^1m_1+...+t^km_k:\;m_i\in M,\;t^i\in \R_+=[0,+\infty),\;i=1,...,k;\;k\in \N\}.$$
(The set $\cone W$ is the minimal wedge containing the set $M$
and it is called \emph{the wedge generated by $M$}.)

We will see next that
\begin{equation}\label{ak}
K=\cone \{e'_1,...,e'_{m-1}\}.
\end{equation}

Since $e'_j\in W\cap L^\perp,$
we have obviously that

\begin{equation}\label{benne}
\cone \{e'_1,...,e'_{m-1}\}\subset K.
\end{equation}

Comparing the vectors $e_i$ and $e'_j$ we get
\begin{equation}\label{comparing}
\frac{1}{m-j+1}(e'_j+e_m)=e_j,\;\;j=1,...,m-1. 
\end{equation}
By substitution of $e_j,\;j=1,...,m-1$, the representation
(\ref{kifejez}) of $x$ becomes
\begin{equation}\label{kifejez1}
x =(x^1-x^2)\frac{1}{m}(e'_1-e_m)+(x^2-x^3)\frac{1}{m-1}(e'_2+e_m)+...+(x^{m-1}-x^m)\frac{1}{2}(e'_{m-1}+e_m)+x^me_m.
\end{equation}
Suppose now that $x\in W$, that is, relations (\ref{ekelem}) hold.
Then  the coefficients of $e'_j,\;j=1,...,m-1$ in its representation (\ref{kifejez1}) are 
non-negative. Thus, we have
\begin{equation}\label{kesz}
	x\in W \Leftrightarrow x =\sum_{j=1}^{m-1} t^je'_j+t^me_m,\textrm{ }t^j\in \R_+,
	\textrm{ }j=1,...,m-1,\textrm{ }t^m\in \R.
\end{equation}

In particular, if $x\in K$, then, by \eqref{subcone}, we have $x\in L^\perp$. Hence, by 
multiplying (\ref{kesz}) scalarly by $e^m$ and by using $\lang x,e_m\rang=0$ 
(which follows from $x\in L^\perp$ and $e_m\in L$) and $\lang e'_j,e_m\rang=0$ 
(which follows from $e'_j\in L^\perp$ and $e_m\in L$), we get $t^m=0.$ 
This reasoning shows that
$$K \subset \cone \{e'_1,...,e'_{m-1}\},$$
inclusion which together with (\ref{benne}) proves (\ref{ak}).

\vspace{4mm}

We consider now the vectors
$$u_1=(-1,1,0,...,0),$$
$$u_2=(0,-1,1,0,...,0),$$
$$\dots$$
$$u_{m-1}=(0,...,0,-1,1).$$

Then $u_i\in L^\perp,\;i=1,...,m-1,$ and we have
\begin{equation}\label{polar}
\lang u_i,e'_j\rang=0 \;\textrm{if} \; i\not= j,\;\;\lang u_i,e'_i\rang <0,\;i,j=1,...,m-1.
\end{equation}

According to the reasonings in \cite{NemethNemeth2009} the
relations (\ref{polar}) show that
$$\cone \{u_1,...,u_{m-1}\}$$
is the polar of $K$ in the subspace $L^\perp$.
Further, we have
$$\lang u_i,u_j\rang \leq 0 \;\;\textrm{if} \; i\not=j.$$
By the main result in \cite{IsacNemeth1986} this
shows that $K$ is an isotone projection cone in $L^\perp.$ 

In conclusion, using Theorem \ref{fo}
%Lemma \ref{mineknev}, Corollary \ref{cor2} and the result of this section 
we have the

\begin{corollary}
The monotone wedge $W$ given by the formula (\ref{mon}) admits an isotone projection.
\end{corollary}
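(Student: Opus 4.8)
The plan is to reduce the corollary to the structure theorem already proved rather than to compute any projection directly. By Theorem~\ref{fo}, the monotone wedge $W$ is an isotone projection wedge if and only if the cone $K=W\cap L^\perp$ is an isotone projection cone inside the hyperplane $L^\perp$, where $L=W\cap(-W)$ is the diagonal line $\{x:x^1=\cdots=x^m\}$. So first I would record the splitting $W=K\oplus L$, note that $\dim L=1$ and hence $\dim K=m-1$, and from then on work entirely in $L^\perp$ with the single cone $K$. This is the only place where the general theory of Section~3 is used; everything afterwards is concrete linear algebra in $L^\perp$.

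The next step is to pin down $K$ by explicit generators. I would look for natural candidates $e'_1,\dots,e'_{m-1}$ lying in $W\cap L^\perp$ (for instance the vectors whose first $j$ coordinates equal $m-j$ and whose remaining coordinates equal $-j$, which are both monotone and summing to zero) and claim that $K=\cone\{e'_1,\dots,e'_{m-1}\}$. The inclusion $\cone\{e'_1,\dots,e'_{m-1}\}\subset K$ is immediate, since each $e'_j\in W\cap L^\perp=K$ and $K$ is a cone. For the reverse inclusion I would pass to the auxiliary basis $e_1,\dots,e_m$ in which membership in $W$ reads off as the coordinatewise inequalities $x^{j-1}-x^j\ge 0$, re-express each $e_j$ through $e'_j$ and $e_m$, and thereby rewrite an arbitrary $x$ so that the coefficients of the $e'_j$ are precisely these non-negative differences. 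Scalar multiplication by $e_m\in L$ then annihilates the $e_m$-component for $x\in L^\perp$, exhibiting $x$ as a non-negative combination of the $e'_j$.

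With $K=\cone\{e'_1,\dots,e'_{m-1}\}$ in hand, the remaining step is to compute the polar of $K$ in $L^\perp$ and then invoke the geometric characterization of \cite{IsacNemeth1986}: a cone is an isotone projection cone exactly when its polar is generated by linearly independent vectors forming mutually non-acute angles. I would take the difference vectors $u_i=(0,\dots,0,-1,1,0,\dots,0)$ (the $-1$ in position $i$), verify that they are biorthogonal to the generators in the sense $\langle u_i,e'_j\rangle=0$ for $i\neq j$ and $\langle u_i,e'_i\rangle<0$, which by the reasoning of \cite{NemethNemeth2009} identifies $\cone\{u_1,\dots,u_{m-1}\}$ as the polar $K^\perp$. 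A direct inspection then shows the $u_i$ are linearly independent and satisfy $\langle u_i,u_j\rangle\le 0$ for $i\neq j$ (equal to $-1$ for consecutive indices and $0$ for disjoint supports), so the hypotheses of the Isac--N\'emeth theorem are met. Hence $K$ is an isotone projection cone, and Theorem~\ref{fo} delivers the corollary.

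I expect the main obstacle to be the reverse inclusion $K\subset\cone\{e'_1,\dots,e'_{m-1}\}$: the clean assertion that the $e'_j$-coefficients of $x$ coincide with the non-negative differences $x^{j-1}-x^j$ is not visible from the defining inequalities and depends on choosing the right intermediate basis and carefully tracking the change of coordinates, including the cancellation of the $e_m$-term on $L^\perp$. Once that explicit representation is secured, the polar computation and the non-acuteness check are short and routine verifications, and the cited characterization closes the argument.
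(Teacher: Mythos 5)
Your proposal is correct and follows essentially the same route as the paper: the reduction via Theorem~\ref{fo} to the cone $K=W\cap L^\perp$, the identification $K=\cone\{e'_1,\dots,e'_{m-1}\}$ through the auxiliary basis $e_1,\dots,e_m$, the biorthogonality relations identifying $\cone\{u_1,\dots,u_{m-1}\}$ as the polar, and the mutual non-acuteness check feeding into the Isac--N\'emeth characterization. The only caveat is cosmetic: the $e'_j$-coefficients of $x$ are positive multiples of the differences $x^j-x^{j+1}$ rather than the differences themselves, but non-negativity is all that is needed.
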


\end{document}